\documentclass[review]{elsarticle}

\usepackage{lineno,hyperref}

\usepackage{graphicx}
\usepackage{dcolumn}
\usepackage{bm}

\usepackage{graphics} 
\usepackage{epsfig} 
\usepackage{times} 
\usepackage{amsmath} 
\usepackage{amssymb}  

\usepackage{amsfonts}
\usepackage{fancyhdr}
\usepackage{color}
\usepackage{subfigure}

\newtheorem{mytheorem}{Theorem}

\newtheorem{mylemma}[mytheorem]{Lemma}

\newtheorem{remark}{Remark}

\newtheorem{problem}{Problem}
\newproof{proof}{Proof}

\modulolinenumbers[5]

\journal{xxxx}









\bibliographystyle{elsarticle-num}

\begin{document}

\begin{frontmatter}

\title{In-domain control of a heat equation: an approach combining zero-dynamics inverse and differential flatness}


\author[mymainaddress,mysecondaryaddress]{Jun Zheng}
\ead{zhengjun2014@aliyun.com}

\author[mysecondaryaddress]{Guchuan Zhu\corref{mycorrespondingauthor}}
\cortext[mycorrespondingauthor]{Corresponding author \\ \hspace*{12pt} Tel: +1-514-340-4711 ext. 5868}
\ead{guchuan.zhu@polymtl.ca}


\address[mymainaddress]{Department of Basic Courses, Southwest Jiaotong University, Emeishan, Sichuan 614202, China}
\address[mysecondaryaddress]{Department of Electrical Engineering, Polytechnique Montreal, P.O. Box 6079, Station Centre-Ville, Montreal, QC, Canada H3T 1J4}

\begin{abstract}
This paper addresses the set-point control problem of a heat equation with in-domain actuation. The proposed scheme is based on the framework of zero dynamics inverse combined with flat system control. Moreover, the set-point control is cast into a motion planing problem of a multiple-input, multiple-out system, which is solved by a Green's function-based reference trajectory decomposition. The validity of the proposed method is assessed through the convergence and solvability analysis of the control algorithm. The performance of the developed control scheme and the viability of the proposed approach are confirmed by numerical simulation of a representative system.
\end{abstract}

\begin{keyword}
Distributed parameter systems; heat equation; zero-dynamics inverse; differential flatness.
\end{keyword}

\end{frontmatter}

\linenumbers


\section{Introduction}\label{Sec: Introduction}
Control of parabolic partial differential equations (PDEs) is a long-standing problem in PDE control theory and practice. There exists a very rich literature devoted to this topic and it is continuing to draw a great attention for both theoretical studies and practical applications. In the existing literature, the majority of work is dedicated to boundary control, which may be represented as a standard Cauchy problem to which functional analytic setting based on semigroup and other related tools can be applied (see, e.g., \cite{Bensoussan:2006,Zwart:95,Lasiecka:2000,Tucsnak2009}). It is interesting to note that in recent years, some methods that were originally developed for the control of finite-dimensional systems have been successfully extended to the control of parabolic PDEs, such as backstepping (see, e.g., \cite{Krstic:2008-1,Smyshyaev:2010,Tsubakino:2012}), flat systems (see, e.g., \cite{Kharitonov:2006,Laroch2000,Lynch2005,Meurer:2013,Petit2002,Rudolph2003,Schorkhuber:2013}), as well as their variations (see, e.g., \cite{Malchow:2011,Meurer:2011,Meurer:2009}).

This paper addresses the in-domain (or interior point) control problem of a heat equation, which may arise in application related concerns for, e.g., the enhancement of control efficiency. Integrating a number of control inputs acting in the domain will lead to non-standard inhomogeneous PDEs \cite{Bensoussan:2006, Lasiecka:2000-2}, which should be treated differently than the standard boundary control problem. The control scheme developed in the present work is based-on the framework of zero-dynamics inverse (ZDI), which was introduced by Byrnes and his collaborators in \cite{BG88} and has been exploited and developed in a series of work \cite{BG09,BGIS03,BGIS206,BGIS106,BGIS13}. It is pointed out in \cite{BGIS106} that ``for certain boundary control systems it is very easy to model the system's zero dynamics, which, in turn, provides a simple systematic methodology for solving certain problems of output regulation." Indeed, the construction of zero dynamics for output regulation of certain in-domain controlled PDEs is also straightforward (see, e.g., \cite{BGIS206}) and hence, the control design can be carried out in a systematic manner. A main issue related to the ZDI design is that the computation of dynamic control laws requires resolving the corresponding zero dynamics, which may be very difficult for generic regulation problems, such as set-point control. To overcome this difficulty, we leverage one of the fundamental properties of flat systems, that is if a lumped or distributed parameter system is differentially flat (or flat for short), then its states and inputs can be explicitly expressed by the so-called flat output and its time-derivatives \cite{Fliess95,Rudolph2003}. In the context of ZDI design, the control can be derived from the flat output without explicitly solving the zero dynamics. Moreover, in the framework of flat systems, set-point control can be cast into a problem of motion planning, which can also be carried out in a systematic manner.

The system model used in this work is taken from \cite{BGIS206}. In order to perform control design, we present the original system in a form of \emph{parallel connection}. This formulation allows a significant simplification of computation. As the control with multiple actuators located in the domain leads to a multiple-input, multiple-output (MIMO) problem, the design of reference trajectories is not trivial. To overcome this problem, we introduce a Green's function-based reference trajectory decomposition scheme that enables a simple and computational tractable implementation of the proposed control algorithm.

The remainder of the paper is organized as follows. Section~\ref{Sec: Problem Setting} describes the model of the considered system and its equivalent settings. Section~\ref{Sec: Control design} presents the detailed control design. Section~\ref{Sec: Motion Planning} deals with motion planning and addresses the convergence and the solvability of the proposed control scheme. A simulation study is carried out in Section~\ref{simulation}, and, finally, some concluding remarks are presented in Section~\ref{Sec: Conclusion}.

\section{Problem Setting}\label{Sec: Problem Setting}
In the present work, we consider a scaler parabolic equation describing one-dimensional heat transfer with
boundary and in-domain control, which is studied in \cite{BGIS206}. Denote by $z(x,t)$ the heat distribution over
the one-dimensional space, $x$, and the time, $t$. The derivatives of $z(x,t)$ with respect to its variables
are denoted by $z_{x}$ and $z_{t}$, respectively. For notational simplicity, we may not show all the variables
of functions if there is no ambiguity, e.g., $z=z(x,t)$. Consider $m$ points $x_j$, $j = 1,\ldots, m$,
in the interval $(0,1)$ and assume, without loss of generality, that $0=x_0<x_1<x_2<\cdots<x_m<x_{m+1}=1$. Let
$\Omega \doteq \underset{j=0}{\overset{m}{\bigcup}}(x_{j},x_{j+1})$. The considered heat equation with boundary
and in-domain control in a normalized coordinate is of the form:
\begin{subequations}\label{eq: heat eq}
\begin{align}
  &z_{t}(x,t)-z_{xx}(x,t)=0,\ \ x\in \Omega,\ t>0, \label{eq: heat eq_dynm}\\
  &z(x,0)=\phi(x),  \label{eq: heat eq_IC} \\
  &B_{0}z=z_{x}(0,t)-k_{0}z(0,t)=0,\label{eq: heat eq_B0} \\
  &B_{1}z=z_{x}(1,t)+k_{1}z(1,t)=0,\label{eq: heat eq_B1}\\
  &z(x_{j}^{+})=z(x_{j}^{-}),\ j=1,...,m,\\
  &B_{x_j}z=\left[z_{x}\right]_{x_j}=u_{j}(t),\ j=1,2,...,m,\label{eq: heat
  eq_jump}
\end{align}
\end{subequations}
where for a function $\psi(\cdot)$ and a point $x\in [0,1]$ we define
\begin{align*}
\left[\psi\right]_{x}=\psi(x^{+})-\psi(x^{-}),
\end{align*}
with $x^-$ and $x^+$ denoting, respectively, the usual meaning of left and
right hand limits to $x$. The initial condition is specified by \eqref{eq: heat eq_IC} with $\phi(x) \in L^2(0,1)$.
In System~\eqref{eq: heat eq}, we assume that we
can control the heat flow in and out of the system at the points
$x_{j}$, i.e.,
\begin{align}
  u_{j}(t)=\left[z_{x}\right]_{x_j}=z_{x}(x_{j}^{+},t)-z_{x}(x_{j}^{-},t).\notag
\end{align}
Note that in \eqref{eq: heat eq}, $B_{x_{j}}, x_{j} \in [0,1],$ represents the point-wise control located on the boundary or in the domain.

The space of weak solutions to System~\eqref{eq: heat eq} is chosen
to be $H^{1}(0,1)$. Note that System~\eqref{eq: heat eq} is
exponentially stable in $H^{1}(0,1)$ if the boundary controls $B_0$
and $B_1$ are chosen such that $k_0\geq 0$, $k_1\geq 0$, and $k_0 +
k_1 > 0$ \cite{BGIS106}.

 Denote a set of reference signals by $\{z^{D}_{i}(x_i,t)\}_{i=1}^{m}$. Let
\begin{align}
e_{i}(t)=z(x_{i},t)-z^{D}_{i}(x_i,t)\notag
\end{align}
be the regulation errors. Let
$e(t)=\{e_{i}(t)\}_{i=1}^{m}$ and $u(t)=\{u_{i}(t)\}_{i=1}^{m}$.
\begin{problem}
The considered regulation problem is to find a dynamic control $u(t)$
such that the regulation error satisfies
$e(t)\rightarrow 0$ as $t\rightarrow \infty$.
\end{problem}

 The above in-domain control problem can also be formulated in
another way by replacing the jump conditions in \eqref{eq: heat
eq_jump} by point-wise controls as source terms. The resulting
system will be of following the form
\begin{subequations}\label{eq: heat eq_2}
\begin{align}
  &z_{t}(x,t)-z_{xx}(x,t)=\sum_{j=1}^{m}\delta (x-x_{j})\alpha_{j}(t),\ 0< x< 1,\ t>0, \label{eq: heat eq_2_dynm}\\
  &z(x,0)=\phi(x),  \label{eq: heat eq_2_BC} \\
  &B_{0}z=z_{x}(0,t)-k_{0}z(0,t)=0,\label{eq: heat eq_2_B0} \\
  &B_{1}z=z_{x}(1,t)+k_{1}z(1,t)=0,\label{eq: heat eq_2_B1}
\end{align}
\end{subequations}
where $\delta(x-x_j)$ is the Dirac delta function supported at the
point $x_{j}$, denoting an actuation spot, and $\alpha_j : t
\mapsto \mathbb{R}$, $j = 1, \ldots, m$, are the in-domain control signals.
\begin{mylemma}\label{Prop: 1}
Considering weak solutions in $H^1(0,1)$, System \eqref{eq: heat eq}
and System~\eqref{eq: heat eq_2} are equivalent if
\begin{eqnarray}
\alpha_{j}(t)=-u_{j}(t)=-\left[z_{x}\right]_{x_j},\ \
j=1,...,m.\notag
\end{eqnarray}
\end{mylemma}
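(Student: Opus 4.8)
The plan is to prove the equivalence at the level of the variational (weak) formulation, exploiting the fact that a Dirac source term and an interior jump in the flux $z_x$ yield exactly the same contribution once the equation is tested against $H^1$ functions. Since in one spatial dimension $H^1(0,1)\hookrightarrow C[0,1]$, every test function $v\in H^1(0,1)$ possesses well-defined point values $v(x_j)$; hence both the delta source $\sum_{j}\delta(x-x_j)\alpha_j(t)$ and the one-sided flux traces $z_x(x_j^{\pm})$ are meaningful, and the two weak formulations can be compared within one and the same functional setting.

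First I would write the weak form of System~\eqref{eq: heat eq_2}: multiply \eqref{eq: heat eq_2_dynm} by $v\in H^1(0,1)$, integrate over $(0,1)$, integrate the $z_{xx}$ term by parts once, and insert the Robin conditions \eqref{eq: heat eq_2_B0}--\eqref{eq: heat eq_2_B1} to eliminate $z_x(0,t)$ and $z_x(1,t)$. The source collapses to $\sum_{j=1}^{m}\alpha_j(t)v(x_j)$, giving
\begin{align*}
\int_0^1 z_t v\,dx+\int_0^1 z_x v_x\,dx+k_1 z(1,t)v(1)+k_0 z(0,t)v(0)=\sum_{j=1}^{m}\alpha_j(t)v(x_j).
\end{align*}

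Next I would derive the weak form of System~\eqref{eq: heat eq}. Since $z$ satisfies the homogeneous equation only on each subinterval $(x_j,x_{j+1})$, I would integrate by parts piecewise over $\Omega$ and sum. The exterior endpoints $x=0,1$ reproduce the same Robin terms as above. At each interior node $x_i$ the one-sided flux traces $z_x(x_i^-)$ and $z_x(x_i^+)$ from the two adjacent subintervals combine, and using the continuity $z(x_i^+)=z(x_i^-)$ together with the continuity of $v$, they enter only through the jump $[z_x]_{x_i}=u_i(t)$. Collecting terms yields
\begin{align*}
\int_0^1 z_t v\,dx+\int_0^1 z_x v_x\,dx+k_1 z(1,t)v(1)+k_0 z(0,t)v(0)=-\sum_{i=1}^{m}u_i(t)v(x_i).
\end{align*}

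Comparing the two identities, which hold for every $v\in H^1(0,1)$, shows that they coincide precisely when $\alpha_j(t)=-u_j(t)$; together with the shared initial data \eqref{eq: heat eq_IC} and boundary operators, this establishes the equivalence. I expect the only delicate point to be the bookkeeping of the interior boundary terms in the piecewise integration by parts, in particular pinning down the sign and the cancellation that turn the pair of one-sided flux traces into the single jump $[z_x]_{x_i}$; the remaining steps follow directly from the $H^1$-continuity of the test functions. Equivalently, one may argue distributionally: for a function that is continuous but whose flux jumps at the points $x_j$, the distributional second derivative equals the piecewise one plus $\sum_{j}[z_x]_{x_j}\delta(x-x_j)$, so the interior dynamics of System~\eqref{eq: heat eq} coincides with that of System~\eqref{eq: heat eq_2} for the source $-\sum_{j}u_j(t)\delta(x-x_j)$, again yielding $\alpha_j=-u_j$.
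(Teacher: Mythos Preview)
Your argument is correct. The core computation---piecewise integration by parts producing the interior jump terms $[z_x]_{x_j}v(x_j)$, which then match the action of the Dirac sources---is exactly the same identity the paper relies on. The difference is in packaging: the paper, following Rebarber~(1995), works in an operator-theoretic setting. It defines the self-adjoint operator $A=\partial_{xx}$ on $\mathcal{D}(A)\subset H^2(0,1)$, an extension $\widetilde{A}$ on the piecewise-$H^2$ space, and a further extension $\hat{A}:H^1(0,1)\to X_{-1}=(\mathcal{D}(A^*))'$ via duality; integration by parts against $v\in\mathcal{D}(A^*)$ then yields the operator identity $\widetilde{A}u=\hat{A}u+\sum_j(u_x(x_j^-)-u_x(x_j^+))\,\delta(x-x_j)$ in $X_{-1}$, from which the equivalence is read off. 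You instead compare the two variational formulations directly, testing once against $v\in H^1(0,1)$ and integrating by parts only once. Your route is more elementary and self-contained; the paper's abstract framework pays off when one wants to view the system as an evolution equation $\dot u=\hat{A}u+Bf$ in the extrapolation space and invoke semigroup machinery. Either way, the sign bookkeeping you flag as the delicate step is indeed the only place one can slip, and you have it right.
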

\begin{proof}
The proof follows the idea presented in \cite{Rebarber:1995}. Indeed, it suffices to prove ``System \eqref{eq: heat eq} $\Rightarrow$
System \eqref{eq: heat eq_2}." Let $X=L^{2}(0,1)$ be a Hilbert space
equipped with the inner product $\langle
u,v\rangle=\displaystyle\int_{0}^{1}u(x)v(x)\text{d}x$. Let the
operator $A$ be defined by $Au=u_{xx}$, with domain $\ \mathcal
{D}(A)=\{u\in H^{2}(0,1); B_{0}u=B_{1}u=0\}$. It is easy to see that
$A^{*}$, the adjoint of $A$, is equal to $A$. Let $\widetilde{A}$ be
an extension of $A$ with domain $\ \mathcal
{D}(\widetilde{A})=\{u\in X; \ u\in
H^{2}(\underset{i=0}{\overset{m}{\cup}}(x_{i},x_{i+1})),
 B_{0}u=B_{1}u=0,\ u(x_{j}^{+})=u(x_{j}^{-}),\ j=1,...,m
 \}$. Let $u\in \mathcal {D}(\widetilde{A}),\ v\in \mathcal {D}(A^*)=\mathcal
 {D}(A)$. Using integration by parts we obtain that
\begin{eqnarray}\label{+2.6}
\langle \widetilde{A}u,v\rangle = \langle u,Av\rangle
+\sum_{j=1}^{m}\left(u_{x}(x_{j}^{-})-u_{x}(x_{j}^{+})\right)v(x_{j}).
\end{eqnarray}
Let $X_{-1}=(\mathcal {D}(A^{*}))'$, the dual space of $\mathcal
{D}(A)$. We need to define another extension for $A$. Let $\hat{A}:
H^{1}(0,1) \rightarrow X_{-1}$ be defined by
\begin{align}\label{hat A}
\langle \hat{A}u,v\rangle=\langle u,A^*v\rangle\ \ \text{for\ all}\
v\in \mathcal {D}(A^*),
\end{align}
with $\mathcal {D}(\hat{A})=H^{1}(0,1)$. Note that $\delta
(\cdot-x_j)$ is not in $X$, but it is in the large space $X_{-1}$.
It follows from \eqref{+2.6}, \eqref{hat A}, and $A=A^*$ that
\begin{align}\label{delta A}
\widetilde{A}u=\hat{A}u+\sum_{j=1}^{m}\left(u_{x}(x_{j}^{-})-u_{x}(x_{j}^{+})\right)\delta
(x-x_j),
\end{align}
in $X_{-1}$. If $u$ satisfies System \eqref{eq: heat eq}, then
$\dot{u}(t)=\widetilde{A}u(t)$, which yields, considering \eqref{delta A},
$\dot{u}(t)=\hat{A}u+\sum_{j=1}^{m}(u_{x}(x_{j}^{-})-u_{x}(x_{j}^{+}))\delta
(x-x_{j})$. Finally, we can see that System \eqref{eq: heat eq}
becomes System \eqref{eq: heat eq_2} with
$\alpha_{j}(t)=-u_{j}(t)=-\left[z_{x}\right]_{x_j},\ j=1,...,m$,
where we look for generalized solutions $u(x,t)\in\mathcal
{D}(\hat{A})=H^1(0,1)$ such that \eqref{delta A} is true in
$X_{-1}$. \hfill $\Box$
\end{proof}

To establish in-domain control at every actuation point, we will
proceed in the way of \emph{parallel connection}, i.e., for every
$x_j\in (0,1)$, consider the following two systems
\begin{subequations}\label{eq: sub heat eq}
\begin{align}
  &z_{t}(x,t)-z_{xx}(x,t)=0,\ \ x\in (0,x_j)\cup (x_j,1),\ t>0, \label{eq: sub heat eq_dynm}\\
  &z(x,0)=\phi_{j}(x),  \label{eq: sub heat eq_BC} \\
  &B_{0}z=z_{x}(0,t)-k_{0}z(0,t)=0,\label{eq: sub heat eq_B0} \\
  &B_{1}z=z_{x}(1,t)+k_{1}z(1,t)=0,\label{eq: sub heat eq_B1}\\
  &z(x_{j}^{+})=z(x_{j}^{-}),\\
  &B_{x_j}z=\left[z_{x}\right]_{x_j}=v_{j}(t).\label{eq: sub heat eq_jump}
\end{align}
\end{subequations}
and
\begin{subequations}\label{eq: sub heat eq 2}
\begin{align}
  &z_{t}(x,t)-z_{xx}(x,t)=\delta (x-x_{j})\beta_{j}(t),\ 0< x< 1,\ t>0, \label{eq: sub heat eq_2_dynm}\\
  &z(x,0)=\phi_{j}(x),  \label{eq: sub heat eq_2_BC} \\
  &B_{0}z=z_{x}(0,t)-k_{0}z(0,t)=0,\label{eq: sub heat eq_2_B0} \\
  &B_{1}z=z_{x}(1,t)+k_{1}z(1,t)=0,\label{eq: sub heat eq_2_B1}
\end{align}
\end{subequations}
with $\sum_{j=1}^m\phi_{j}(x)=\phi(x)$. Similarly, System
\eqref{eq: sub heat eq} and \eqref{eq: sub heat eq 2} are equivalent
provided $z\in H^1(0,1)$ and
$\beta_{j}=-v_{j}=-\left[z_{x}\right]_{x_j}$.
 Let $\alpha_j=-u_j=\beta_j=-v_j=-[z^j_x]_{x_j}$ for any
$j=1,2,...,m$, where $z^j$ denotes the solution to System \eqref{eq:
sub heat eq 2}. One may directly check that
$z(x,t)=\sum_{j=1}^{m}z^j(x,t)$ is a solution to System \eqref{eq:
heat eq_2}. Moreover,
\begin{align}
[z_x]_{x_i}=\sum_{j=1}^{m}[z^{j}_{x}]_{x_{i}}=[z^i_x]_{x_i}=u_i,\notag
\end{align}
for any $i=1,2,...,m$. Hence $z(x,t)=\sum_{j=1}^{m}z^j(x,t)$ is a
solution to System \eqref{eq: heat eq}. Therefore, throughout this
paper, we assume $\alpha_j=-u_j=\beta_j=-v_j=-[z^j_x]_{x_j}$ for any
$j=1,2,...,m$. Due to the equivalences of System \eqref{eq: heat eq}
and \eqref{eq: heat eq_2}, and System \eqref{eq: sub heat eq} and
\eqref{eq: sub heat eq 2}, we may consider \eqref{eq: heat eq_2} and
System \eqref{eq: sub heat eq} in the following parts.

\section{Control Design Based on Zero-Dynamics Inverse and Differential Flatness}\label{Sec: Control design}
In the framework of zero-dynamics inverse, the in-domain control 
is derived from the so-called forced zero-dynamics that are constructed from the original system dynamics
by replacing the control constraints by the regulation constraints. To work with the \emph{parallel connected}
system~\eqref{eq: sub heat eq}, we first split the reference signal as:
\begin{equation}\label{eq: splt ref}
  z^{D}(x,t)=\sum^m_{j=1}\gamma_{j}(x,x_j)z^d_j(x_j,t),
\end{equation}
in which the function $\gamma_{j}(x,x_j)$ will be determined in Proposition~\ref{Prop: 4} (see Section~\ref{Sec: Motion Planning}).
Denoting by $\varepsilon^{j}(t)=z^j(x_j,t)-z^{d}_{j}(x_{j},t)$ the regulation error corresponding to System~\eqref{eq: sub heat eq}
and replacing the control constraint by $\varepsilon^{j}(t)= 0$, we obtain the corresponding zero-dynamics for a
fixed $j$:
\begin{subequations}\label{eq: heat zd}
\begin{align}
  &\xi_{t}(x,t)=\xi_{xx}(x,t),\ x\in (0,x_j)\cup (x_j,1),\ t>0, \label{eq: heat zd_dynm}\\
  &\xi(x,0)=0, \label{eq: heat zd_BC} \\
  &\xi_{x}(0,t)-k_{0}\xi(0,t)=0,\label{eq: heat zd_B0} \\
  &\xi_{x}(1,t)+k_{1}\xi(1,t)=0,\label{eq: heat zd_B1} \\
  &\xi(x_{j},t)=z^{d}_{j}(x_{j},t). \label{eq: heat zd_reg}
\end{align}
\end{subequations}
For simplicity, we denote by $z^j$ and $\xi^j$ the solutions
to the $j^{th}$ systems \eqref{eq: sub heat eq} and \eqref{eq: heat
zd}, respectively. Also, we write henceforth $z^{d}_{j}(t)=z^{d}_{j}(x_j,t)$ as the reference
trajectory in the $j^{th}$ system \eqref{eq: heat zd} if there is no ambiguity. The in-domain
control signal of System \eqref{eq: sub heat eq} can then be computed by
\begin{equation}\label{eq: in-domain control}
  v_{j} = [z^j_x]_{x_j}=[\xi^j_x]_{x_j}.
\end{equation}
\begin{remark}
Note that for $k_{0}\geq 0, k_{1}\geq 0$ with $k_{0}+k_{1}\neq
0$, arguing as \cite{BGIS206}, we have
\begin{equation*}
 \varepsilon^{j}(t)=z^j(x_j,t)-z^{d}_{j}(x_j,t) \rightarrow 0 \text{ as } t\rightarrow \infty.
\end{equation*}
\end{remark}

 Obviously, to find the control signals, we need to solve the
corresponding zero-dynamics \eqref{eq: heat zd}. For this purpose, we leverage the
technique of flat systems
\cite{Fliess:1999-CDC,Meurer:2013,Rudolph2003}. In particular, we
apply a standard procedure of Laplace transform-based method to find
the solution to \eqref{eq: heat zd}. Henceforth, we denote by
$\widehat{f}(x,s)$ the Laplace transform of a function $f(x,t)$ with
respect to time variable. Then, for fixed $x_{j}\in (0,1)$, the
transformed equations of \eqref{eq: heat zd} in the Laplace domain read as
\begin{subequations}\label{eq: heat zd_laplace}
\begin{align}
  &s\widehat{\xi}(x,s)=\widehat{\xi}_{xx}(x,s),\ x\in (0,x_j)\cup (x_j,1),\ s\in\mathbb{C}, \label{eq: heat zd_laplace_dynm}\\
  &\widehat{\xi}(x,0)=0, \label{eq: heat zd_laplace_BC} \\
  &\widehat{\xi}_{x}(0,s)-k_{0}\widehat{\xi}(0,s)=0, \label{eq: heat zd_laplace_B0} \\
  &\widehat{\xi}_{x}(1,s)+k_{1}\widehat{\xi}(1,s)=0,\label{eq: heat zd_laplace_B1} \\
  &\widehat{\xi}(x_{j},s)=\widehat{z}^{d}_{j}(x_{j},s). \label{eq: heat zd_laplace_reg}
\end{align}
\end{subequations}

 We divide \eqref{eq: heat zd_laplace} into two sub-systems, i.e.,
for fixed $x_{j}\in (0,1)$, considering
\begin{subequations}\label{eq: heat zd_laplaceL}
\begin{align}
  &s\widehat{\xi}(x,s)=\widehat{\xi}_{xx}(x,s),\ 0< x< x_{j},\ s\in\mathbb{C}, \label{eq: heat zd_laplaceL_dynm}\\
  &\widehat{\xi}(x,0)=0, \label{eq: heat zdzd_laplaceL_BC} \\
  &\widehat{\xi}_{x}(0,s)-k_{0}\widehat{\xi}(0,s)=0, \label{eq: heat zd_laplaceL_B0} \\
  &\widehat{\xi}(x_{j},s)=\widehat{z}^{d}_{j}(x_{j},s), \label{eq: heat zd_laplaceL_reg}
\end{align}
\end{subequations}
and
\begin{subequations}\label{eq: heat zd_laplaceR}
\begin{align}
  &s\widehat{\xi}(x,s)=\widehat{\xi}_{xx}(x,s),\ x_{j}< x< 1,\ s\in\mathbb{C}, \label{eq: heat zd_laplaceR_dynm}\\
  &\widehat{\xi}(x,0)=0, \label{eq: heat zdzd_laplaceL_BC} \\
  &\widehat{\xi}_{x}(1,s)+k_{1}\widehat{\xi}(x,s)=0, \label{eq: heat zd_laplaceR_B1} \\
  &\widehat{\xi}(x_{j},s)=\widehat{z}^{d}_{j}(x_{j},s), \label{eq: heat zd_laplaceR_reg}
\end{align}
\end{subequations}

Let $\widehat{\xi}^{j}_{-}(x,s)$ and $\widehat{\xi}^{j}_{+}(x,s)$ be
the general solutions to \eqref{eq: heat zd_laplaceL} and \eqref{eq:
heat zd_laplaceR}, respectively, and denote their inverse Laplace
transforms by ${\xi}^{j}_{-}(x,t)$ and ${\xi}^{j}_{+}(x,t)$. The
solution to \eqref{eq: heat zd} can be written as
\begin{align*}
\xi^{j}(x,t)={\xi}^{j}_{-}(x,t)\chi_{\{(0,x_j)\}}+{\xi}^{j}_{+}(x,t)\chi_{\{[x_j,1)\}},
\end{align*}
where
\begin{equation*}
  \chi(x)_{\{\Omega_j\}}
 = \left\{
     \begin{array}{ll}
       1, & \hbox{$x \in \Omega_j\subseteq (0,1)$;} \\
       0, & \hbox{otherwise.}
     \end{array}
   \right.
\end{equation*}
Then at each point $x_{i}\in (0,1)$, by \eqref{eq: in-domain
control} and the argument of ``\emph{parallel connection}" (see
Section~\ref{Sec: Problem Setting}), we have $[z_x]_{x_i}
 =\sum_{j=1}^{m}[z^j_x]_{x_i}=
[z^i_x]_{x_i}=[\xi^i_x]_{x_i}, \ i = 1,\ldots, m$. Hence the
in-domain control signals of System \eqref{eq: heat eq} can be
computed by
\begin{equation}\label{control u}
  u_{i} = [z_x]_{x_i} = [\xi^i_x]_{x_i}, \ i = 1,\ldots, m.
\end{equation}

In the following steps, we present the computation of the solution
to System \eqref{eq: heat zd}, $\xi^{j}$. Issues related to the
reference trajectory $z^D(x,t)$ for System \eqref{eq: heat eq} will
be addressed in Section~\ref{Sec: Motion Planning}.

Note that $\widehat{\xi}^{j}_{-}(x,s)$ and
 $\widehat{\xi}^{j}_{+}(x,s)$,
the general solutions to \eqref{eq: heat zd_laplaceL} and \eqref{eq:
heat zd_laplaceR}, are given by
\begin{align*}
\widehat{\xi}^{j}_{-}(x,s)&=C_{1}\phi_{1}(x,s)+C_{2}\phi_{2}(x,s), \\
\widehat{\xi}^{j}_{+}(x,s)&=C_{3}\phi_{1}(x,s)+C_{4}\phi_{2}(x,s),
\end{align*}
with
\begin{align*}
\phi_{1}(x,s)=\frac{\sinh(\sqrt{s}x)}{\sqrt{s}}, \ \phi_{2}(x,s)=\cosh(\sqrt{s}x).
\end{align*}
We obtain by applying \eqref{eq: heat zd_laplaceL_B0} and \eqref{eq:
heat zd_laplaceL_reg}
\begin{align*}
    &C_{1}\phi_{1}(x_{j},s)+ C_{2}\phi_{2}(x_{j},s)=\widehat{z}^{d}_{j}(x_{j},s),\  C_{1}-k_{0}C_{2}=0,
\end{align*}
which can be written as
\begin{equation*}
  \begin{pmatrix}
    \phi_{1}(x_{j},s) & \phi_{2}(x_{j},s) \\
    1  &  -k_{0}
  \end{pmatrix}
  \begin{pmatrix}
    C_{1}\\
    C_{2}\\
  \end{pmatrix}
  =
  \begin{pmatrix}
    \widehat{z}^{d}_{j}(x_{j},s)\\
    0 \\
  \end{pmatrix}.
\end{equation*}
Let
\begin{equation*}
R^{j}_{-}=\begin{pmatrix}
            \phi_{1}(x_{j},s) & \phi_{2}(x_{j},s) \\
            1  &  -k_{0}
          \end{pmatrix}
\end{equation*}
and
\begin{eqnarray}\label{w^{j}1}
\widehat{z}^{d}_{j}(x_{j},s)=-\text{det}(R^{j}_{-})\widehat{y}^{j}_{-}(x_{j},s).
\end{eqnarray}
We obtain
\begin{equation*}
 \begin{pmatrix}
    C_{1}\\
    C_{2}\\
  \end{pmatrix}
 =\dfrac{\text{adj}(R^{j}_{-})}{\text{det}(R^{j}_{-})}
   \begin{pmatrix}
    \widehat{z}^{d}_{j}(x_{j},s)\\
    0 \\
   \end{pmatrix}
 =
  \begin{pmatrix}
    k_{0}\widehat{y}^{j}_{-}(x_{j},s)\\
    \widehat{y}^{j}_{-}(x_{j},s) \\
  \end{pmatrix}.
\end{equation*}
Therefore, the solution to \eqref{eq: heat zd_laplaceL} can be expressed as
\begin{eqnarray}\label{eq: heat zd_laplaceL_sol}
\widehat{\xi}^{j}_{-}(x,s)=(k_{0}\phi_{1}(x)+\phi_{2}(x))\widehat{y}^{j}_{-}(x_{j},s).
\end{eqnarray}

 We may proceed in the same way to deal with \eqref{eq: heat zd_laplaceR}. Indeed, letting
\begin{eqnarray*}
R^{j}_{+}=
  \begin{pmatrix}
    \phi_{1}(x_{j},s) & \phi_{2}(x_{j},s) \\
    \phi_{2}(1,s)+k_{1}\phi_{1}(1,s)&s\phi_{1}(1,s)+k_{1}\phi_{2}(1,s) \\
  \end{pmatrix}
\end{eqnarray*}
and
\begin{eqnarray}\label{w^{j}2}
\widehat{z}^{d}_{j}(x_{j},s)=\text{det}(R^{j}_{+})\widehat{y}^{j}_{+}(x_{j},s),
\end{eqnarray}
we get from \eqref{eq: heat zd_laplaceR}
\begin{eqnarray*}
  \begin{pmatrix}
    C_{3}\\
    C_{4}\\
  \end{pmatrix}
  =
  \begin{pmatrix}
    (s\phi_{1}(1,s)+k_{1}\phi_{2}(1,s))\widehat{y}^{j}_{+}(x_{j},s)\\
    -(\phi_{2}(1,s)+k_{1}\phi_{1}(1,s))\widehat{y}^{j}_{+}(x_{j},s) \\
  \end{pmatrix},
\end{eqnarray*}
and
\begin{align}\label{eq: heat zd_laplaceR_sol}
   \widehat{\xi}^{j}_{+}(x,s)
 = &\left((s\phi_{1}(1,s)+k_{1}\phi_{2}(1,s))\phi_{1}(x)\right. \nonumber\\
   &+\left.(\phi_{2}(1,s)+k_{1}\phi_{1}(1,s))\phi_{2}(x)\right)\widehat{y}^{j}_{+}(x_{j},s).
\end{align}

Applying modulus theory \cite{Mounier:1995,Rouchon:2001} to \eqref{w^{j}1} and \eqref{w^{j}2}, we may
choose $\widehat{y}_{j}(x_{j},s)$ as the basic output such that
\begin{align}
\widehat{y}^{j}_{+}(x_{j},s)&=-\text{det}(R^{j}_{-})\widehat{y}_{j}(x_{j},s),\label{eq: y+}\\
\widehat{y}^{j}_{-}(x_{j},s)&=\text{det}(R^{j}_{+})\widehat{y}_{j}(x_{j},s). \label{eq: y-}
\end{align}
Then, using the property of hyperbolic functions, we obtain from
\eqref{eq: heat zd_laplaceL_sol} and \eqref{eq: heat zd_laplaceR_sol} that
\begin{align}
\widehat{\xi}^{j}_{-}(x,s)=&\left(k_{1}\frac{\sinh(\sqrt{s}x_{j}-\sqrt{s})}{\sqrt{s}}-\cosh(\sqrt{s}x_{j}-\sqrt{s})\right) \nonumber\\
                           &\times \left(k_{0}\frac{\sinh(\sqrt{s}x)}{\sqrt{s}} +\cosh(\sqrt{s}x)\right)\widehat{y}_{j}(x_{j},s),\\
\widehat{\xi}^{j}_{+}(x,s)=&\left(k_{1}\frac{\sinh(\sqrt{s}x-\sqrt{s})}{\sqrt{s}}-\cosh(\sqrt{s}x-\sqrt{s})\right) \nonumber\\
                           &\times \left(k_{0}\frac{\sinh(\sqrt{s}x_{j})}{\sqrt{s}}+\cosh(\sqrt{s}x_{j})\right)\widehat{y}_{j}(x_{j},s).
\end{align}

Note that
\begin{align}\label{eq: solution in s-domain}
\widehat{\xi}^{j}(x,s)=\widehat{\xi}^{j}_{-}(x,s)\chi_{\{(0,x_{j})\}}+\widehat{\xi}^{j}_{+}(x,s)\chi_{\{[x_{j},1)\}}
\end{align}
is a solution to \eqref{eq: heat zd_laplace}. Using the fact
\begin{eqnarray*}
\sinh x=\sum^{\infty}_{n=0}\frac{x^{2n+1}}{(2n+1)!}, \
\cosh x=\sum^{\infty}_{n=0}\frac{x^{2n}}{(2n)!},
\end{eqnarray*}
we obtain
\begin{align}
  &\widehat{\xi}^{j}(x,s) \notag\\
 =&\Bigg[\Bigg(k_{0}k_{1}\sum_{n=0}^{\infty}\sum_{k=0}^{n}\frac{x^{2k+1}(x_{j}-1)^{2(n-k)+1}}{(2k+1)![2(n-k)+1]!}s^{n}
   -k_{0}\sum_{n=0}^{\infty}\sum_{k=0}^{n}\frac{x^{2k+1}(x_{j}-1)^{2(n-k)}}{(2k+1)![2(n-k)]!}s^{n}\notag\\
  &+k_{1}\sum_{n=0}^{\infty}\sum_{k=0}^{n}\frac{x^{2k}(x_{j}-1)^{2(n-k)+1}}{(2k)![2(n-k)+1]!}s^{n}
   -\sum_{n=0}^{\infty}\sum_{k=0}^{n}\frac{x^{2k}(x_{j}-1)^{2(n-k)}}{(2k)![2(n-k)]!}s^{n}\Bigg)\chi_{\{(0,x_{j})\}}\notag\\
  &+\Bigg(k_{0}k_{1}\sum_{n=0}^{\infty}\sum_{k=0}^{n}\frac{x_{j}^{2k+1}(x-1)^{2(n-k)+1}}{(2k+1)![2(n-k)+1]!}s^{n}
   -k_{0}\sum_{n=0}^{\infty}\sum_{k=0}^{n}\frac{x_{j}^{2k+1}(x-1)^{2(n-k)}}{(2k+1)![2(n-k)]!}s^{n}\notag\\
  &+ k_{1}\sum_{n=0}^{\infty}\sum_{k=0}^{n}\frac{x_{j}^{2k}(x-1)^{2(n-k)+1}}{(2k)![2(n-k)+1]!}s^{n}
   -\sum_{n=0}^{\infty}\sum_{k=0}^{n}\frac{x_{j}^{2k}(x-1)^{2(n-k)}}{(2k)![2(n-k)]!}s^{n}\Bigg)\chi_{\{[x_{j},1)\}}\Bigg]\widehat{y}_{j}.
    \notag
\end{align}
It follows that
\begin{align}\label{eq: xi_j}
  &\xi^{j}(x,t) \notag\\
 =&\Bigg[\Bigg(k_{0}k_{1}\sum_{n=0}^{\infty}\sum_{k=0}^{n}\frac{x^{2k+1}(x_{j}-1)^{2(n-k)+1}}{(2k+1)![2(n-k)+1]!}y_{j}^{(n)}
   -k_{0}\sum_{n=0}^{\infty}\sum_{k=0}^{n}\frac{x^{2k+1}(x_{j}-1)^{2(n-k)}}{(2k+1)![2(n-k)]!}y_{j}^{(n)}\notag\\
  &+k_{1}\sum_{n=0}^{\infty}\sum_{k=0}^{n}\frac{x^{2k}(x_{j}-1)^{2(n-k)+1}}{(2k)![2(n-k)+1]!}y_{j}^{(n)}
   -\sum_{n=0}^{\infty}\sum_{k=0}^{n}\frac{x^{2k}(x_{j}-1)^{2(n-k)}}{(2k)![2(n-k)]!}y_{j}^{(n)}\Bigg)\chi_{\{(0,x_{j})\}}\notag\\
  &+\Bigg(k_{0}k_{1}\sum_{n=0}^{\infty}\sum_{k=0}^{n}\frac{x_{j}^{2k+1}(x-1)^{2(n-k)+1}}{(2k+1)![2(n-k)+1]!}y_{j}^{(n)}
   -k_{0}\sum_{n=0}^{\infty}\sum_{k=0}^{n}\frac{x_{j}^{2k+1}(x-1)^{2(n-k)}}{(2k+1)![2(n-k)]!}y_{j}^{(n)}\notag\\
  &+ k_{1}\sum_{n=0}^{\infty}\sum_{k=0}^{n}\frac{x_{j}^{2k}(x-1)^{2(n-k)+1}}{(2k)![2(n-k)+1]!}y_{j}^{(n)}
   -\sum_{n=0}^{\infty}\sum_{k=0}^{n}\frac{x_{j}^{2k}(x-1)^{2(n-k)}}{(2k)![2(n-k)]!}y_{j}^{(n)}\Bigg)\chi_{\{[x_{j},1)\}}\Bigg].
\end{align}
By a direct computation we get
\begin{align}
\left[\widehat{\xi}^{j}_{x}\right]_{x_{j}}
  =&\left[\left(\frac{k_{0}k_{1}}{\sqrt{s}}+\sqrt{s}\right)\sinh(\sqrt{s})+(k_{0}+k_{1})\cosh(\sqrt{s})\right]
    \times \widehat{y}_{j}(x_{j},s)\nonumber\\
  =&\left(k_{0}k_{1}\sum^{\infty}_{n=0}\frac{s^{n}}{(2n+1)!}+(k_{0}+k_{1})\sum^{\infty}_{n=0}\frac{s^{n}}{(2n)!}\right.
    \left. +\sum^{\infty}_{n=0}\frac{s^{n+1}}{(2n+1)!}\right)\widehat{y}_{j}(x_{j},s).
\end{align}
It follows from \eqref{control u} that
\begin{align}\label{eq: control u_j}
u_{j}(t)=&\left[\xi^{j}_{x}\right]_{x_{j}} \nonumber\\
        =&k_{0}k_{1}\sum^{\infty}_{n=0}\frac{y^{(n)}_{j}(x_{j},t)}{(2n+1)!}+(k_{0}+k_{1})\sum^{\infty}_{n=0}\frac{y^{(n)}_{j}(x_{j},t)}{(2n)!}
          +\sum^{\infty}_{n=0}\frac{y^{(n+1)}_{j}(x_{j},t)}{(2n+1)!}.
\end{align}

\begin{remark}
The reference signal $z^d_j(x_j,t)$ can be derived in the same way from the flat output from \eqref{w^{j}1} and \eqref{w^{j}2}. However, as
the flatness-based control is driven by flat output, there is no need to explicitly compute $z^d_j(x_j,t)$.
\end{remark}

\section{Motion Planning}\label{Sec: Motion Planning}
For control purpose, we have to choose appreciate reference trajectories, or equivalently the basic outputs.
In the present work, we consider the set-point control problem, i.e. to steer the heat distribution
to a desired steady-state profile, denoted by $\bar{z}^D(x)$.
Without loss of generality, we consider a set of basic outputs of the form:
\begin{equation}\label{eq: ref traj}
  y_j(t) = \overline{y}(x_j)\varphi_j(t), \ j = 1,\ldots, m,
\end{equation}
where $\varphi_j(t)$ is a smooth function evolving from 0 to 1. Motion planning amounts then to deriving
$\overline{y}(x_j)$ from $\bar{z}^D(x)$ and to determining appropriate functions $\varphi_j(t)$, for $j = 1, \ldots, m$.

To this aim and due to the equivalence of the systems \eqref{eq: heat eq} and \eqref{eq: heat eq_2}, we
consider the steady-state heat equation corresponding to System~\eqref{eq: heat eq_2}:
\begin{subequations}\label{eq: heat_eq_ss}
\begin{align}
  &\bar{z}_{xx}(x)=\sum_{j=1}^{m}\delta (x-x_{j})\bar{\alpha}_{j},\ 0< x<1,\ t>0, \label{eq: heat eq_ss_dynm}\\
  &\bar{z}_{x}(0)-k_{0}\bar{z}(0)=0,\label{eq: heat eq_ss_B0} \\
  &\bar{z}_{x}(1)+k_{1}\bar{z}(1)=0.\label{eq: heat eq_ss_B1}
\end{align}
\end{subequations}

Based on the principle of superposition for linear systems, the solution to the steady-state heat
equation \eqref{eq: heat_eq_ss} can be expressed as:
\begin{align}\label{+14}
  \bar{z}(x)=\int_{0}^1\sum_{j=1}^{m}G(x,\zeta)\delta
  (\zeta-x_{j})\bar{\alpha}_{j}\text{d}\zeta=\sum_{j=1}^{m}G(x,x_{j})\bar{\alpha}_{j}.
\end{align}
where $G(x,\zeta)$ is the Green's function corresponding to \eqref{eq: heat_eq_ss}, which is of the form
\begin{eqnarray}\label{eq: Geen}
G(x,\zeta)=\left\{
  \begin{array}{ll}
    \dfrac{(k_{1}\zeta-k_{1}-1)(k_{0}x+1)}{k_{0}+k_{1}+k_{0}k_{1}},\  0\leq x<\zeta;~~~ \hbox{} \\
    \dfrac{(k_{1}x-k_{1}-1)(k_{0}\zeta+1)}{k_{0}+k_{1}+k_{0}k_{1}}, \ \zeta\leq x\leq 1.\hbox{}
  \end{array}
\right.
\end{eqnarray}
Indeed, it is easy to check that $G_{xx}(x,\zeta)=\delta(x-\zeta)$
and $G(x,\zeta)$ satisfies the boundary conditions, $G_{x}(0,\zeta)-k_{0}G(0,\zeta)=0$
and $G_{x}(1,\zeta)+k_{1}G(1,\zeta)=0$,
the joint condition, $G(\zeta^+,\zeta)= G(\zeta^-,\zeta)$,
and the jump condition, $[G_{x}(x,\zeta)]_{\zeta}=1$.

 Taking $m$ distinguished points along
the solution to \eqref{eq: heat_eq_ss},
$\bar{z}(x_{1},t),\ldots,\bar{z}(x_{m},t)$, we get
\begin{align}\label{eq: map}
  \begin{pmatrix}
    \bar{z}(x_{1}) \\
    \vdots \\
    \bar{z}(x_{m})
  \end{pmatrix}
 =
  \begin{pmatrix}G(x_{1},x_{1}) & \cdots& G(x_{1},x_{m})\\
  \vdots & \ddots&\vdots\\
   G(x_{m},x_{1}) &\cdots&G(x_{m},x_{m})\\
  \end{pmatrix}
  \begin{pmatrix}
    \bar{\alpha}_{1} \\
    \vdots \\
    \bar{\alpha}_{m}
  \end{pmatrix}.
\end{align}
\ \\
\begin{mylemma}\label{Prop: 2}
The matrix $(G(x_{i},x_{j}))_{m\times m}$ chosen as in \eqref{eq: map}
is invertible. Thus
\begin{align}\label{eq: inv map}
  \begin{pmatrix}
    \bar{\alpha}_{1} \\
    \vdots \\
    \bar{\alpha}_{m}
  \end{pmatrix}
 =
  \begin{pmatrix}G(x_{1},x_{1}) & \cdots& G(x_{1},x_{m})\\
  \vdots & \ddots&\vdots\\
   G(x_{m},x_{1}) &\cdots&G(x_{m},x_{m})\\
  \end{pmatrix}^{-1}
  \begin{pmatrix}
    \bar{z}(x_{1}) \\
    \vdots \\
    \bar{z}(x_{m})
  \end{pmatrix}.
\end{align}
\end{mylemma}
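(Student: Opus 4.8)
The plan is to exploit the semiseparable (single-pair) structure that the Green's function \eqref{eq: Geen} inherits from the underlying Sturm--Liouville problem and to reduce invertibility to the non-vanishing of an elementary product. Write $D = k_0 + k_1 + k_0 k_1$, which is strictly positive under the standing assumptions $k_0 \ge 0$, $k_1 \ge 0$, $k_0 + k_1 > 0$. Introduce the two affine functions $p(x) = k_0 x + 1$ and $q(x) = k_1(x-1) - 1$, which are, respectively, the solutions of $\psi'' = 0$ satisfying the left condition $p'(0) - k_0 p(0) = 0$ and the right condition $q'(1) + k_1 q(1) = 0$. With this notation \eqref{eq: Geen} reads $G(x,\zeta) = \frac{1}{D}\, p(\min\{x,\zeta\})\, q(\max\{x,\zeta\})$. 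Since $0 < x_1 < \cdots < x_m < 1$ are ordered, the matrix in \eqref{eq: map} becomes $M_{ij} := G(x_i,x_j) = \frac{1}{D}\, p(x_{\min\{i,j\}})\, q(x_{\max\{i,j\}})$; that is, $M$ is a single-pair matrix generated by the sequences $p_i := p(x_i)$ and $q_i := q(x_i)$.

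First I would record the signs of the generators: because $x_i \in (0,1)$ and $k_0, k_1 \ge 0$, one has $p_i = k_0 x_i + 1 \ge 1 > 0$ and $q_i = k_1(x_i - 1) - 1 \le -1 < 0$ for every $i$; in particular all $q_i$ are nonzero. This lets me factor $M = \frac{1}{D}\,\mathrm{diag}(q)\, N\, \mathrm{diag}(q)$, where $N_{ij} = c_{\min\{i,j\}}$ with $c_i := p_i/q_i$ (the identity $q_i q_j\, c_{\min\{i,j\}} = p_{\min\{i,j\}} q_{\max\{i,j\}}$ is immediate). The matrix $N$ is the classical min-matrix, whose determinant is computed by the elementary row reduction $R_i \leftarrow R_i - R_{i-1}$ for $i = m, m-1, \ldots, 2$: this renders $N$ upper triangular with diagonal $c_1, c_2 - c_1, \ldots, c_m - c_{m-1}$, so that $\det N = c_1 \prod_{i=2}^{m}(c_i - c_{i-1})$. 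Consequently $\det M = D^{-m}\big(\prod_{i=1}^m q_i\big)^2 \det N$, and since all $q_i \ne 0$, invertibility of $M$ reduces to checking $c_1 \ne 0$ and $c_i \ne c_{i-1}$ for $i = 2, \ldots, m$.

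The decisive ingredient is a generalized Wronskian identity for the affine pair $(p,q)$. A direct expansion yields $p(x)q(y) - p(y)q(x) = (y - x)\,D$ for all $x,y$. Hence $p_i q_{i-1} - p_{i-1} q_i = (x_{i-1} - x_i)\,D$, and therefore $c_i - c_{i-1} = \frac{p_i q_{i-1} - p_{i-1} q_i}{q_i q_{i-1}} = \frac{(x_{i-1} - x_i)\,D}{q_i q_{i-1}}$. Since $x_{i-1} < x_i$, $D > 0$, and $q_i q_{i-1} > 0$, each $c_i - c_{i-1}$ is strictly negative, hence nonzero, while $c_1 = p_1/q_1 \ne 0$. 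Thus $\det N \ne 0$, so $\det M \ne 0$, which proves the matrix in \eqref{eq: map} is invertible and justifies \eqref{eq: inv map}.

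I expect the main obstacle to be not any single estimate but the bookkeeping needed to justify the determinant factorization cleanly: one must verify that the scaling $M = D^{-1}\mathrm{diag}(q)\,N\,\mathrm{diag}(q)$ is exactly correct (which rests on $q_i \ne 0$) and that the row reduction on $N$ produces the stated triangular form with the claimed diagonal. Both are routine once the single-pair structure is made explicit, so the only genuinely computational input is the Wronskian identity $p(x)q(y) - p(y)q(x) = (y-x)D$, which collapses all off-diagonal information into the strict ordering of the actuation points.
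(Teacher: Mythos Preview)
Your argument is correct, and it differs substantially from the paper's. The paper argues by contradiction: it assumes a nontrivial linear relation among the columns, reads the $j$th column as the piecewise-affine function $x\mapsto G(x,x_j)$, and observes that if $G(\cdot,x_n)$ agrees with a combination $\sum_{i<n}l_iG(\cdot,x_i)$ at all nodes $x_1,\dots,x_n$, then by piecewise linearity the two functions coincide on $[x_1,x_n]$; matching one-sided slopes at the interior nodes then forces $l_1=1$, $l_2=\cdots=l_{n-1}=0$, and finally $k_0+k_1+k_0k_1=0$, a contradiction. Your route instead exploits the single-pair structure $G(x_i,x_j)=D^{-1}p_{\min\{i,j\}}q_{\max\{i,j\}}$ directly: the factorization $M=D^{-1}\mathrm{diag}(q)\,N\,\mathrm{diag}(q)$ with $N_{ij}=c_{\min\{i,j\}}$ and the row reduction give the explicit formula
\[
\det M=D^{-m}\Big(\prod_{i=1}^m q_i\Big)^{2}c_1\prod_{i=2}^{m}(c_i-c_{i-1}),
\]
and the Wronskian identity $p(x)q(y)-p(y)q(x)=(y-x)D$ shows every factor is nonzero. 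Your approach is cleaner and more quantitative (it even gives the sign of $\det M$), while the paper's proof is more hands-on but needs no prior acquaintance with semiseparable matrices; the substantive content is the same, since your Wronskian identity and the paper's terminal contradiction $D=0$ encode exactly the same obstruction.
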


\begin{proof}
For $m=1$, since $k_0\geq 0,k_{1}\geq 0, k_0+k_1>0$ and $x_{1}\in
(0,1)$, it follows that
$G(x_{1},x_{1})=\frac{(k_{1}x_{1}-k_{1}-1)(k_{0}x_{1}+1)}{k_{0}+k_{1}+k_{0}k_{1}}<0$.
Hence it is invertible.  We prove the claim for $m>1$ by contradiction.
Suppose that the matrix $(G(x_{i},x_{j}))_{m\times m}$ is not
invertible, then it is of rank $m-1$ or less. Without loss of
generality, we may assume that for some $x_n>x_i$ with
$i=1,...,n-1$, there exist $n-1$ constants $l_{1},\ l_{2}, ..., \
l_{n-1}$ such that
\begin{subequations}\label{green function at points}
\begin{align}
G(x_{1},x_{n})&=\sum_{i=1}^{n-1}l_{i}G(x_{1},x_{i}),\\
&\vdots\notag\\
G(x_{n},x_{n})&=\sum_{i=1}^{n-1}l_{i}G(x_{n},x_{i}),
\end{align}
\end{subequations} where $1<n\leq m$ and $\sum_{i=1}^{n-1}l^2_i>0$. Let
\begin{align*}
G(x)=G(x,x_{n}),\ \ F(x)=\sum_{i=1}^{n-1}l_{i}G(x,x_{i}).
\end{align*}
\eqref{green function at points} shows that $F(x)=G(x)$ at every
boundary point of $[x_1,x_2],\ [x_2,x_3], \ldots,[x_{n-1},x_n]$. Note
that $F(x)$ is a linear function in $[x_1,x_2],\ [x_2,x_3], \ldots,[x_{n-1},x_n]$, and that
$G(x)=\frac{(k_{1}x_n-k_{1}-1)(k_{0}x+1)}{k_{0}+k_{1}+k_{0}k_{1}}$
in $[x_1,x_n]$, i.e., $G(x)$ is a linear function in $[x_1,x_n]$.
Hence $F(x)\equiv G(x)$ in $[x_1,x_n]$.

By $F(x_1)=G(x_1)$, we get
\begin{align}\label{F(x1)=G(x1)}
k_{1} x_{n}-k_{1}-1=\sum^{n-1}_{i=1}l_{i}(k_{1}x_{i}-k_{1}-1).
\end{align}
By $F(x_n)=G(x_n)$, we get
\begin{align}\label{F(xn)=G(xn)}
k_{0} x_{n}+1=\sum^{n-1}_{i=1}l_{i}(k_{0}x_{i}+1).
\end{align}
Therefore
\begin{align}\label{sum li =1}
 \sum^{n-1}_{i=1}l_{i}=  1.
\end{align}
By $F_{x}(x_1^+)=G_{x}(x_1^+)$ and $F_{x}(x_n^-)=G_{x}(x_n^-)$, we get
\begin{align}
  k_0(k_{1} x_{n}-k_{1}-1)
 =&k_0k_1\sum^{n-1}_{i=1}l_{i}x_i- k_0(k_1+1)\sum^{n-1}_{i=2}l_{i}+l_1k_1,\nonumber\\
 =&k_0k_1\sum^{n-1}_{i=1}l_{i}x_i+k_1\sum^{n-1}_{i=1}l_{i}. \label{DF(xn)=DG(xn)}
\end{align}
It follows that $\sum^{n-1}_{i=2}l_{i}=0$,
which yields, considering \eqref{sum li =1}, $l_{1}=1$.
By $F_{x}(x_2^+)=G_{x}(x_2)=F_{x}(x_2^-)$, we deduce
\begin{align*}
 &l_1k_1(k_0x_1+1)+l_2k_1(k_0x_2+1)+k_0\sum^{n-1}_{i=3}l_{i}(k_1x_i-k_1-1)\\
=&l_1k_1(k_0x_1+1)+k_0\sum^{n-1}_{i=2}l_{i}(k_1x_i-k_1-1),\notag
\end{align*}
which gives $l_{2}=0$.
Similarly, by $F_{x}(x_j^+)=G_{x}(x_j)=F_{x}(x_j^-) \
(j=3,4,...,n-2)$, we obtain
$
l_{3}=l_4=...=l_{n-2}=0.
$ 
Hence $l_{n-1}=0$. Then we deduce from \eqref{DF(xn)=DG(xn)} that
\begin{align}\label{contradiction 1}
k_0(k_1x_n-k_1-1)=k_1(k_0x_1+1).
\end{align}
It follows from \eqref{F(xn)=G(xn)} that
\begin{align}\label{contradiction 2}
k_0x_1+1=k_0x_n+1.
\end{align}
We conclude by \eqref{contradiction 1} and \eqref{contradiction 2}
that $k_0+k_1+k_0k_1=0$, which is a contradiction to $k_0\geq 0,\
k_1\geq 0$, and $k_0+k_1>0$. \hfill $\Box$
\end{proof}

 In steady-state, we can obtain from \eqref{eq: control u_j} that
\begin{eqnarray}\label{eq: static input-output}
\overline{u}_{j}= (k_{0}k_{1}+k_{0}+k_{1})\overline{y}(x_{j}) = -\bar{\alpha}_{j}.
\end{eqnarray}
Finally, $\overline{y}(x_{j})$ can be computed by \eqref{eq: inv map} and
\eqref{eq: static input-output} for a given $\bar{z}^D(x)$.

It is worth noting that \eqref{eq: inv map} provides a simple and straightforward way to compute the static control
from the prescribed steady-state profile. Indeed, a direct computation can show that applying \eqref{eq: inv map} will result in the
same static control obtained in \cite{BGIS206} where a \emph{serially connected} model is used.

To ensure the convergence of \eqref{eq: xi_j} and \eqref{eq: control u_j}, we
choose the following smooth function as $\varphi_j(t)$:
\begin{equation}\label{eq: Gevrey function}
\varphi_j(t) = \left\{
\begin{array}{l l}
  0, & \quad \mbox{if $t \leq 0$}\\
  \dfrac{\displaystyle\int_0^t \exp(-1/(\tau(1-\tau)))^{\varepsilon}d\tau}{\displaystyle\int_0^T \exp(-1/(\tau(1-\tau)))^{\varepsilon}d\tau}, & \quad \mbox{if $t\in (0,T)$}\\
  1, & \quad \mbox{if $t \geq T$}\\
  \end{array} \right.
\end{equation}
which is known as Gevrey function of order $\sigma=1+1/\varepsilon$,
$\varepsilon>0$ (see, e.g., \cite{Rudolph2003}).

\begin{mylemma}\label{Prop: 3}
If the basic outputs $\varphi_j(t)$, $j = 1, \ldots, m$, are chosen
as Gevrey functions of order $1<\sigma<2$, then the infinite series
\eqref{eq: xi_j} and \eqref{eq: control u_j} are convergent.
\end{mylemma}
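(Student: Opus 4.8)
The plan is to reduce both infinite series to a single scalar estimate driven by the Gevrey bound on $\varphi_j$. Recall that $\varphi_j$ being Gevrey of order $\sigma$ means there exist constants $M, R > 0$ (depending on $T$ and $\varepsilon$) such that $|\varphi_j^{(n)}(t)| \leq M (n!)^\sigma / R^n$ for all $n \geq 0$ and all $t$. Since $y_j(t) = \overline{y}(x_j)\varphi_j(t)$ by \eqref{eq: ref traj}, every time-derivative factorizes as $y_j^{(n)}(x_j,t) = \overline{y}(x_j)\varphi_j^{(n)}(t)$, so the same bound controls $|y_j^{(n)}|$ up to the constant $|\overline{y}(x_j)|$, uniformly in $t$.

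Next I would treat the control series \eqref{eq: control u_j}, whose general terms are $y_j^{(n)}/(2n)!$, $y_j^{(n)}/(2n+1)!$ and $y_j^{(n+1)}/(2n+1)!$. Using the elementary inequality $(n!)^2 \leq (2n)!$, each is bounded by
\[
\left| \frac{y_j^{(n)}}{(2n)!} \right| \leq C\,\frac{(n!)^\sigma}{R^n (2n)!} \leq C\,\frac{(n!)^{\sigma-2}}{R^n},
\]
and likewise for the other two (absorbing the extra derivative and the shift $n \to n+1$ into the constant $C$). Because $1 < \sigma < 2$ gives $\sigma - 2 < 0$, the ratio of consecutive bounds is $(n+1)^{\sigma-2}/R \to 0$, so the ratio test yields absolute and uniform-in-$t$ convergence of all three series.

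For the solution series \eqref{eq: xi_j}, the only new feature is the inner sum over $k$ together with the spatial monomials. Since $x, x_j \in [0,1]$, every factor $x^{2k+1}$, $(x_j-1)^{2(n-k)+1}$, etc., has modulus at most $1$, so each double sum is dominated termwise by the $x$-independent quantity $\big(\sum_{k=0}^n \tfrac{1}{(2k+1)![2(n-k)+1]!}\big)|y_j^{(n)}|$ and its analogues. Evaluating the inner sum by the binomial identity
\[
\sum_{k=0}^n \frac{1}{(2k+1)![2(n-k)+1]!} = \frac{1}{(2n+2)!}\sum_{k=0}^n \binom{2n+2}{2k+1} = \frac{2^{2n+1}}{(2n+2)!},
\]
and similarly $\sum_{k=0}^n \tfrac{1}{(2k)![2(n-k)]!} \leq 2^{2n}/(2n)!$, I reduce each double sum to a series whose terms are bounded by $C\,(4/R)^n (n!)^{\sigma-2}$, again convergent by the ratio test. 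Summing the four pieces and multiplying by the characteristic functions shows \eqref{eq: xi_j} converges absolutely and uniformly on $[0,1]\times\mathbb{R}$.

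The crux is the interplay between the factorial growth $(2n)!$ in the denominators and the Gevrey growth $(n!)^\sigma$ in the numerators: the bound $(n!)^2 \leq (2n)!$ converts the denominator into $(n!)^2$, leaving a residual factor $(n!)^{\sigma-2}$. The requirement $\sigma < 2$ is precisely what makes this residual decay factorially, overpowering the geometric factor $(4/R)^n$ regardless of the Gevrey radius $R$. At the borderline $\sigma = 2$ the residual is $O(1)$ and convergence would instead hinge on $R > 4$, which the construction does not guarantee; this is why the hypothesis is stated as $1 < \sigma < 2$. The main obstacle in executing the proof is thus the bookkeeping of the double sums — carrying out the binomial evaluation for all four combinations and checking that the estimates are uniform in $x$ and $t$ — rather than any single hard inequality.
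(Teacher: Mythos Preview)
Your proposal is correct and follows essentially the same strategy as the paper: bound the spatial factors by $1$, collapse the inner sum via the binomial identity, and compare the Gevrey growth $(n!)^{\sigma}$ against the $(2n)!$ in the denominator. The only technical difference is that the paper applies the Cauchy--Hadamard (root) test together with Stirling's formula to evaluate $\limsup_{n\to\infty}|b_n|^{1/n}$, whereas you use $(n!)^2\le(2n)!$ and the ratio test; your route is slightly more elementary and yields the sharper geometric factor $4^n$ (the paper records $2^n$), which explains why your borderline condition at $\sigma=2$ reads $R>4$ rather than the paper's $K>1/2$.
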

\begin{proof}
We prove the convergence of the power series \eqref{eq: xi_j}
and \eqref{eq: control u_j} using Cauchy-Hadamard Theorem. Indeed, it suffices
to prove the convergence of
\begin{equation}\label{+21}
\sum_{n=0}^{\infty}\sum_{k=0}^{n}\frac{1}{(2k)![2(n-k)!]}\varphi^{(n)}_{j}(t).
\end{equation}
Then the convergence of the series in \eqref{eq: xi_j} and
\eqref{eq: control u_j} follows easily using the same argument.
%

We recall that the bounds of Gevrey functions of order $\sigma$ are
given by \cite{DPM03}
\begin{equation}\label{eq: gevrey bounds}
    \exists K, M>0, \forall k \in \mathbb{Z}_{\geq 0}, \forall t\in[t_0,T], \left| \varphi^{(k+1)}(t)\right|\leq M\frac{(k!)^{\sigma}}{K^k}.
\end{equation}
Denote in \eqref{+21}
\begin{equation*}
    b_n= \sum_{k=0}^{n}\frac{1}{(2k)![2(n-k)!]}\varphi^{(n)}_{j}(t).
\end{equation*}
Then, \eqref{+21} converges if
$\limsup_{n\rightarrow\infty}\sqrt[n]{|b_n|}<1$.
Now $b_n$ can be estimated by \eqref{eq: gevrey bounds}
\begin{align*}
    |b_{n}| & \leq \sum_{k=0}^{n}\frac{M}{(2k)![2(n-k)!]}\frac{(n!)^{\sigma}}{K^n} \notag\\
            & \leq M\frac{2^n}{(2n)!}\frac{(n!)^{\sigma}}{K^n}.\notag
\end{align*}
Therefore
\begin{align*}
\limsup_{n\rightarrow\infty}\sqrt[n]{|b_n|} &\leq
\limsup_{n\rightarrow\infty}\frac{2}{K} M^{1/n}\frac{
\left((n!)^{1/n}\right)^{\sigma}}{{\left(((2n)!)^{1/2n}\right)^{2}}} \nonumber\\
&\leq\limsup_{n\rightarrow\infty}\frac{2}{K}
\frac{(n/e)^\sigma}{(2n/e)^2}\notag\\
&=\frac{e^{2-\sigma}}{2K}\limsup_{n\rightarrow\infty}
n^{\sigma-2}=\left\{
   \begin{array}{ll}
     0, & \hbox{$\sigma<2$,} \\
     \dfrac{1}{2K}, & \hbox{$\sigma=2$,} \\
      \infty, & \hbox{$\sigma>2$,}
   \end{array}
 \right.
\end{align*}
where in the second inequality we applied Stirling's formaula
$\sqrt[n]{n!} \simeq (n/e)$. We can conclude by Cauchy-Hadamard
Theorem that \eqref{+21} converges for $\sigma<2$, and for
$\sigma=2$ if $2K>1$. The series \eqref{+21} diverges if $\sigma>2$.
\hfill $\Box$
\end{proof}

\begin{mytheorem}\label{Prop: 4}
Assume $k_0\geq 0, \ k_1\geq 0,$ with $k_0+k_1>0$. Let the basic
outputs $\varphi_j(t)$, $j = 1, \ldots, m$, be chosen as \eqref{eq:
Gevrey function} with an order $1<\sigma<2$. Let the reference trajectory
of System \eqref{eq: heat eq} be given by \eqref{eq: splt ref} with
\begin{equation}\label{eq: gama}
  \gamma_{j}(x,x_j) =-\dfrac{(k_0k_1+k_0+k_1)G(x,x_j)}{(k_0x_j+1)(k_0(x_j-1)-1)}, \ j = 1,\ldots, m,
\end{equation}
where
$G(x,\zeta)$ is the Green's function defined by \eqref{eq: Geen}. Then the regulation error of System~\eqref{eq: heat eq} with the control given in \eqref{eq: control u_j} tends to zero, i.e.,
\begin{align*}
e_{i}(t)=z(x_{i},t)-z^{D}_{i}(x_i,t)\rightarrow 0\ \ \text{as}\
t\rightarrow \infty,
\end{align*}
for $i=1,2,...,m.$
\end{mytheorem}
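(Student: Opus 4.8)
The plan is to reduce the vanishing of the regulation error to two facts: the closed-loop state $z(\cdot,t)$ of System~\eqref{eq: heat eq} converges to a steady-state profile, and the designed reference converges at the actuation points $x_i$ to the \emph{same} profile. Writing $e_i(t)=z(x_i,t)-z^{D}(x_i,t)$ with $z^{D}$ the reference \eqref{eq: splt ref}, and invoking the parallel-connection decomposition $z=\sum_{j=1}^m z^j$, it suffices to show that both $z(x_i,t)$ and $z^{D}(x_i,t)$ tend to one common steady value $\bar z(x_i)$ as $t\to\infty$.

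First I would exploit the Gevrey basic output \eqref{eq: Gevrey function}. Since $\varphi_j\equiv1$ and $\varphi_j^{(n)}\equiv0$ for all $n\ge1$ once $t\ge T$, the flat outputs satisfy $y_j(t)=\overline{y}(x_j)$ and $y_j^{(n)}(t)=0$, $n\ge1$, for $t\ge T$. By Lemma~\ref{Prop: 3} the series \eqref{eq: control u_j} and \eqref{eq: xi_j} converge, so on $\{t\ge T\}$ only their $n=0$ terms remain: the control reduces to the constant $u_j=(k_0k_1+k_0+k_1)\overline{y}(x_j)=\overline{u}_j$ of \eqref{eq: static input-output}, and the reference \eqref{eq: splt ref} reduces to a constant profile. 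Reading \eqref{eq: xi_j} at $x=x_j$ gives the limiting $j$-th zero-dynamics value $\overline{z}^{d}_j(x_j)=(k_0+k_1+k_0k_1)G(x_j,x_j)\,\overline{y}(x_j)$, where I have recognised the $n=0$ coefficient through the Green's function \eqref{eq: Geen}.

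Next I would identify the two limiting profiles. For $t\ge T$ the plant is driven by constant sources; by the equivalence in Lemma~\ref{Prop: 1} its unique steady state is $\bar z(x)=\sum_{j=1}^m G(x,x_j)\bar\alpha_j$ from \eqref{+14}, with $\bar\alpha_j=-\overline{u}_j$. Setting $w=z-\bar z$, the deviation solves the homogeneous problem~\eqref{eq: heat eq} with zero jumps at the $x_j$ and the dissipative conditions $k_0\ge0$, $k_1\ge0$, $k_0+k_1>0$; the exponential stability in $H^{1}(0,1)$ recalled in Section~\ref{Sec: Problem Setting} then yields $\|w(\cdot,t)\|_{H^1}\to0$, and the embedding $H^{1}(0,1)\hookrightarrow C[0,1]$ upgrades this to $z(x_i,t)\to\bar z(x_i)$. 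On the reference side I would substitute $\overline{z}^{d}_j(x_j)$ and the weights \eqref{eq: gama} into \eqref{eq: splt ref}; a direct computation using \eqref{eq: Geen} and \eqref{eq: static input-output} collapses the weighted sum to $\bar z^{D}(x)=\sum_{j=1}^m G(x,x_j)\bar\alpha_j=\bar z(x)$, so that $z^{D}(x_i,t)\to\bar z(x_i)$ as well. Combining the two limits gives $e_i(t)\to\bar z(x_i)-\bar z(x_i)=0$ for each $i$.

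The step I expect to be the main obstacle is the algebraic consistency $\bar z^{D}\equiv\bar z$: one must check that the Green's-function weights \eqref{eq: gama}, the frozen zero-dynamics value $\overline{z}^{d}_j(x_j)$, and the static gain \eqref{eq: static input-output} are mutually calibrated so that the designed set point is exactly the reachable steady state, and it is here that the specific form of $\gamma_j$ is forced and that the invertibility of the motion-planning map in Lemma~\ref{Prop: 2} guarantees a compatible $\overline{y}(x_j)$ for the prescribed $\bar z^{D}$. By contrast the state-convergence step is comparatively routine, resting on the quoted exponential stability; the only care required there is to transfer $H^1$-decay of the deviation to pointwise decay at the actuation points $x_i$.
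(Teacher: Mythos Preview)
Your proposal is correct and follows essentially the same route as the paper: show that both $z(x_i,t)$ and $z^{D}(x_i,t)$ converge to the common steady profile $\bar z(x_i)=\sum_j G(x_i,x_j)\bar\alpha_j$, whence $e_i(t)\to0$. The paper merely phrases this as a triangle-inequality splitting of $|e_i(t)|$ into the three pieces $|z(x_i,t)-\bar z(x_i)|$, $|\bar z(x_i)+(k_0k_1+k_0+k_1)\sum_j G(x_i,x_j)\overline y(x_j)|$ (identically zero by \eqref{+14} and \eqref{eq: static input-output}), and a term comparing $\xi^j(x_j,t)/[(k_0x_j+1)(k_1(x_j-1)-1)]$ with $\overline y(x_j)$, which is exactly your ``algebraic consistency'' step carried out at the level of the frozen $n=0$ coefficient of \eqref{eq: xi_j}. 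Your version is slightly more explicit than the paper on two points the paper simply asserts: the pointwise upgrade $z(x_i,t)\to\bar z(x_i)$ via $H^1$-stability and the embedding $H^1(0,1)\hookrightarrow C[0,1]$, and the identification of the $n=0$ term of $\xi^j(x_j,\cdot)$ with $(k_0+k_1+k_0k_1)G(x_j,x_j)\overline y(x_j)$.
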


\begin{proof}
By a direct computation we have
\begin{align*}
 |e_i(t)|
=&|z(x_{i},t)-z^{D}_i(x_{i},t)|\notag\\
=& \left|z(x_{i},t)+\sum_{j=1}^m
\frac{(k_0k_1+k_0+k_1)G(x_i,x_j)z^d_j(x_j,t)}{(k_0x_j+1)(k_0(x_j-1)-1)}\right|\notag\\
=
&\left|z(x_{i},t)+\sum_{j=1}^m\frac{(k_0k_1+k_0+k_1)G(x_i,x_j)\xi^j(x_j,t)}{(k_0x_j+1)(k_0(x_j-1)-1)}\right|\notag\\
\leq &\left|z(x_{i},t)-\bar{z}(x_i)\right|
 +\left|\bar{z}(x_i)+
(k_0k_1+k_0+k_1)\sum_{j=1}^m G(x_i,x_j)\overline{y}(x_{j})\right|\notag\\
&+\left|(k_0k_1+k_0+k_1)\sum_{j=1}^m \frac{G(x_i,x_j)\xi^j(x_j,t)}{(k_0x_j+1)(k_0(x_j-1)-1)}\right.\notag\\
&\left.-(k_0k_1+k_0+k_1)\sum_{j=1}^m G(x_i,x_j)\overline{y}(x_{j})\right|\notag.
\end{align*}
By \eqref{eq: static input-output} and \eqref{+14}, it follows
\begin{align*}
\bar{z}(x_i)=- (k_0k_1+k_0+k_1)\sum_{j=1}^m
G(x_i,x_j)\overline{y}(x_{j}).
\end{align*}
Based on \eqref{eq: xi_j}, \eqref{eq: ref traj}, and the property of $\varphi_j(t)$ we have
\begin{align*}
\frac{\xi^j(x_j,t)}{(k_0x_j+1)(k_0(x_j-1)-1)}\rightarrow
\overline{y}(x_{j})\ \ \text{as}\ t\rightarrow \infty.
\end{align*}
Note that
$
z(x_{i},t)\rightarrow \bar{z}(x_i)\ \ \text{as}\ t\rightarrow
\infty.
$
Therefore
$
|e_{i}(t)|\rightarrow 0\ \ \text{as}\ t\rightarrow \infty.
$
\hfill $\Box$
\end{proof}
\begin{remark}
For any $x\in (0,1)$, replace $x_i$ by $x$ in the proof of
Proposition~\ref{Prop: 4}, we can get
$
|z(x,t)-z^{D}(x,t)|\rightarrow 0\ \ \text{as}\ t\rightarrow \infty,
$ 
which shows that the solution $z(x,t)$ of System \eqref{eq: heat eq}
converges to the reference trajectory $z^{D}(x,t)$ at every point
$x\in (0,1)$.
\end{remark}

\section{Simulation Study} \label{simulation}
In the simulation, we implement System~(\ref{eq: heat eq_2}) with $12$ actuators evenly distributed in the domain at the spot points $\{1/13, 2/13, \ldots, 12/13\}$. The heat distribution and the time are all represented in normalized coordinates. The numerical implementation is based on a PDE solver, \textsf{pdepe}, in Matlab PDE Toolbox. 200 points in space and 50 points in time are used for the region $[0, 1]\times [0, 2]$ in numerical simulation. The basic outputs $\varphi_j(t)$ used in the simulation are Gevrey functions of the same order. In
order to meet the convergence condition given in Proposition~\ref{Prop: 3}, the order of Gevrey functions is set to $\sigma = 1.1$. The feedback boundary control gains are chosen as $k_0 = k_1 = 10$. The initial condition in simulation is set to $z(x,0)=\cos(\pi x)$.

The desired heat distribution and the evolution of heat distribution of the system controlled by the developed algorithm are depicted in Fig.~\ref{fig: Solution}. Snapshots of regulation errors are presented in Fig.~\ref{fig: errors}, which show that the regulation error tends to 0 along the space. The control signals that steer the heat distribute from the initial profile to track the de sired one are illustrated in Fig.~\ref{fig: Controls}. The simulation results show that the system performs very well with affordable control efforts.

\begin{figure}[thpb]\
  \centering
  \subfigure[]{\label{fig: profile}  \includegraphics[scale=.42]{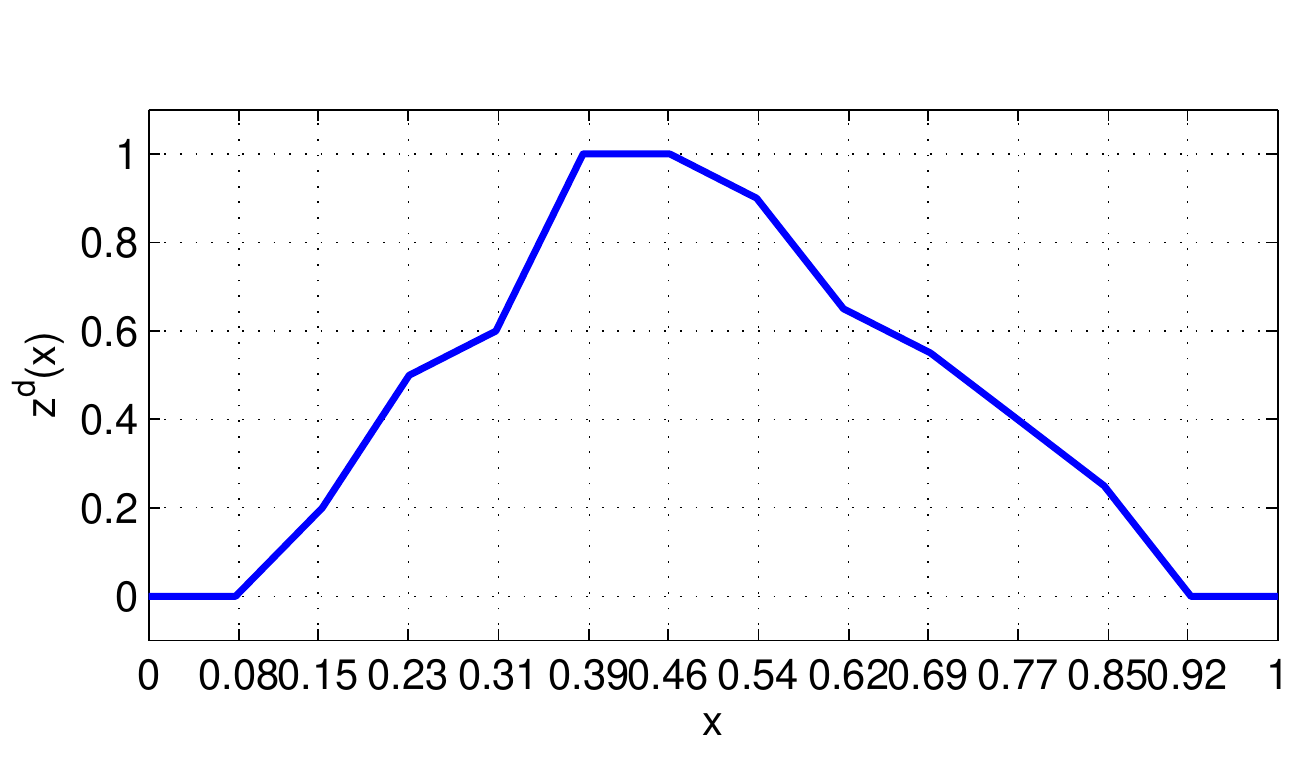}}
  \subfigure[]{\label{fig: sol surf} \includegraphics[scale=.42]{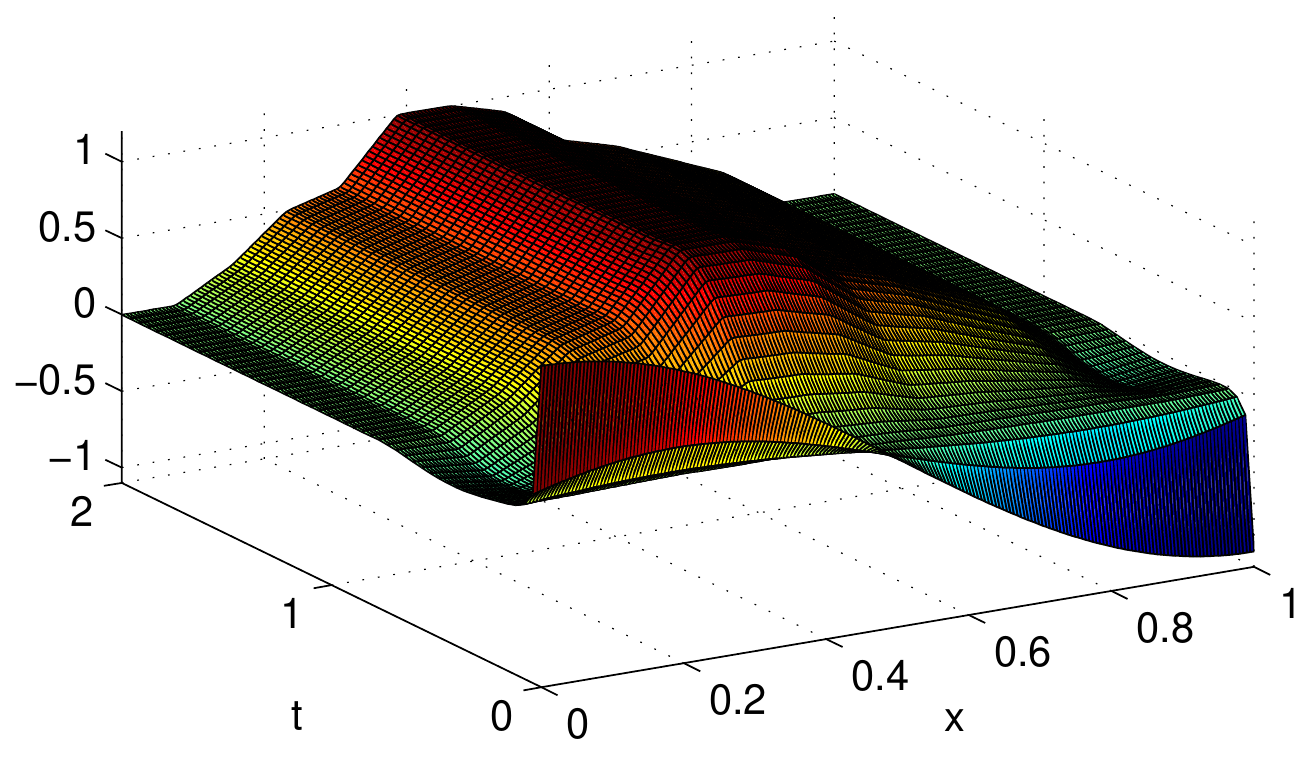}}
  \caption{Evolution of heat distribution:
           (a) desired profile;
           (b) solution surface.}
  \label{fig: Solution}
\end{figure}

\begin{figure}[thpb]\
  \centering
  \subfigure[]{\label{fig: errors_2}\includegraphics[scale=.45]{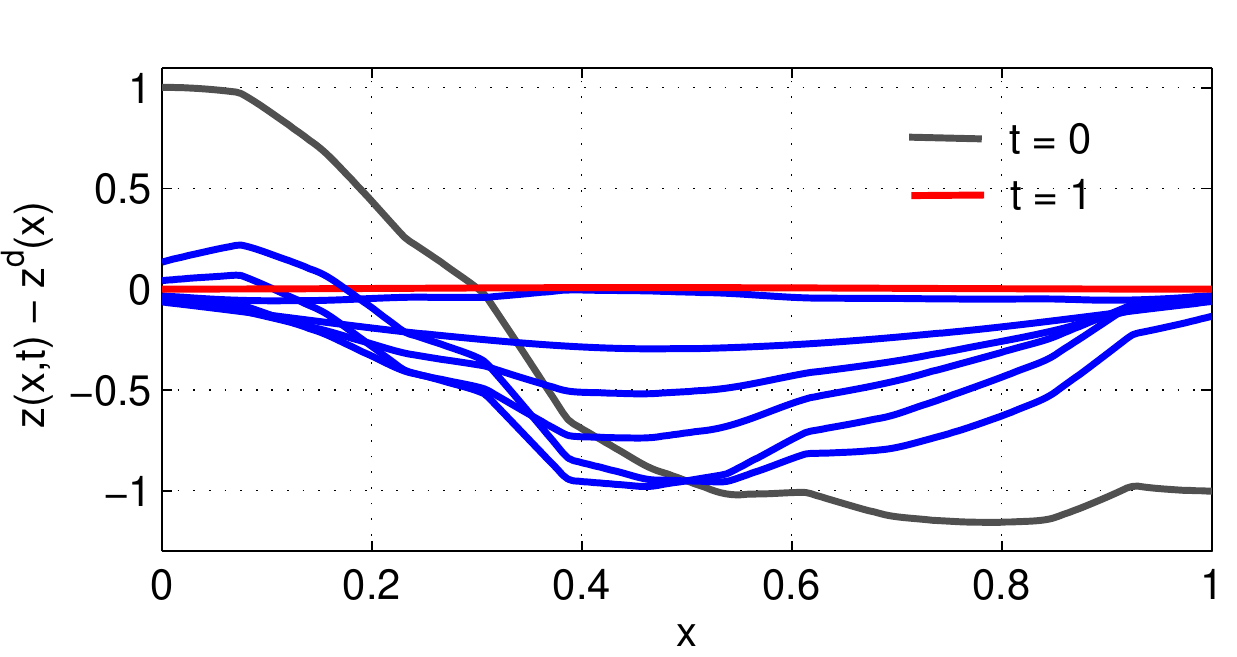}}
  \subfigure[]{\label{fig: errors_2}\includegraphics[scale=.45]{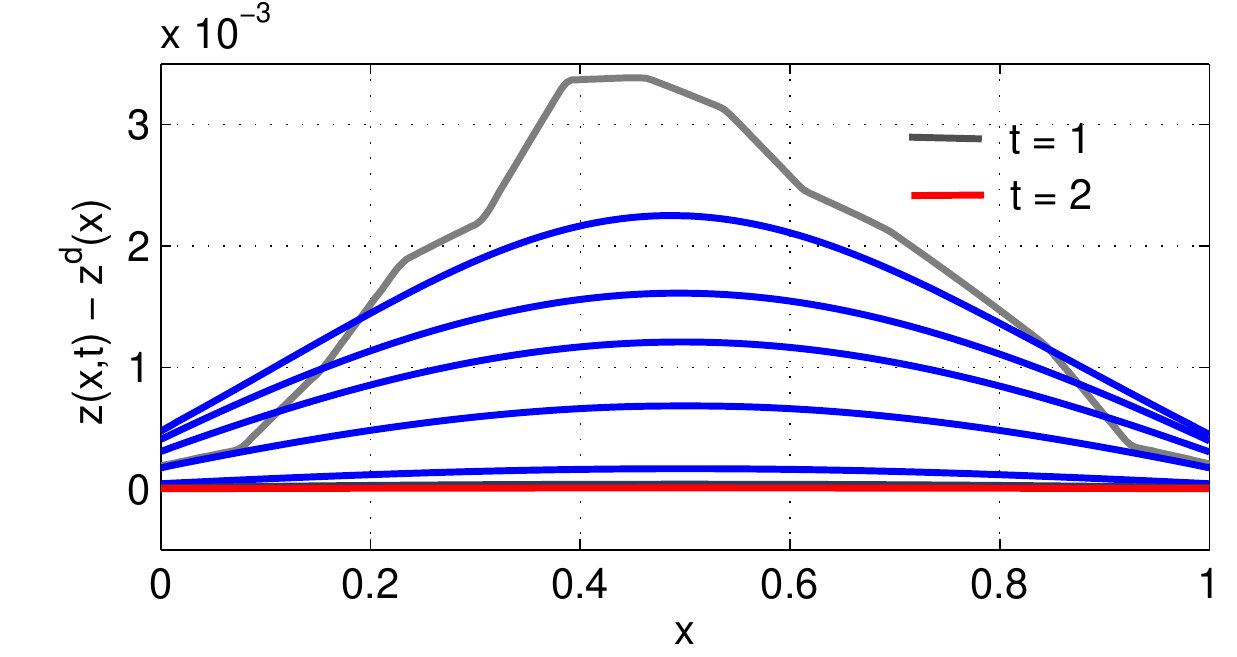}}
  \caption{Snapshot of regulation errors:
           (a) errors for $t\in [0,1]$;
           (b) errors for $t\in [1,2]$.}
  \label{fig: errors}
\end{figure}

\begin{figure}[thpb]\
  \centering
  \subfigure[]{\label{fig:Controls_1_6}  \includegraphics[scale=.45]{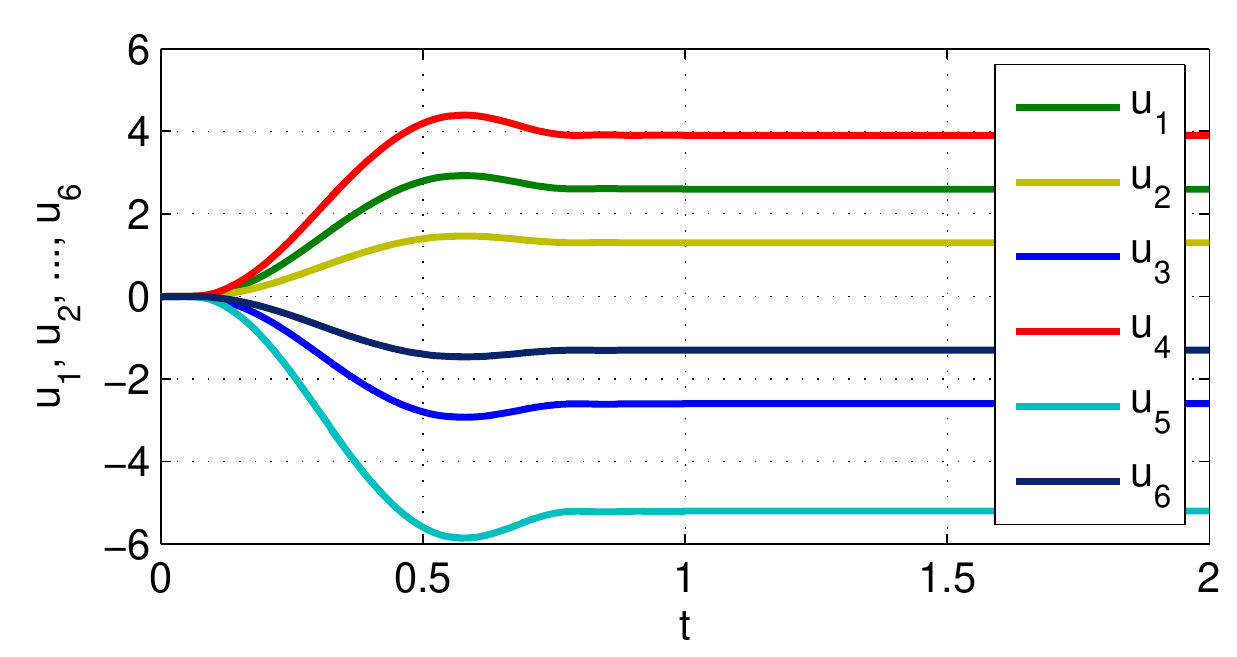}}
  \subfigure[]{\label{fig: Controls_7_12}\includegraphics[scale=.45]{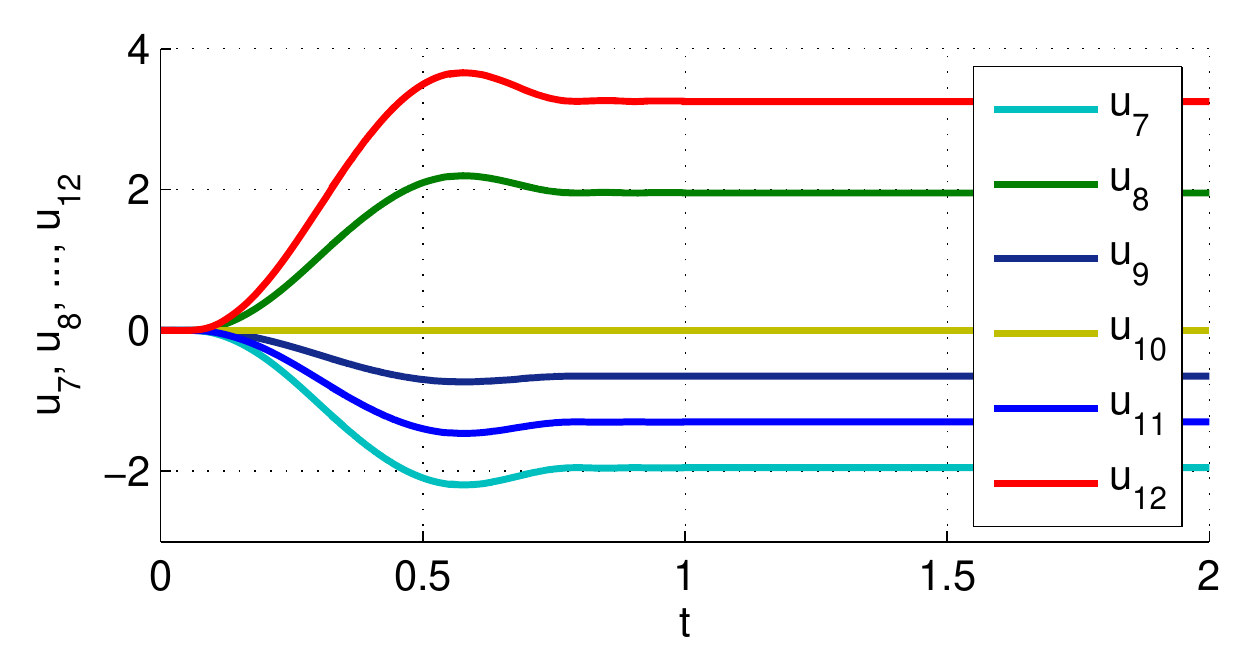}}
  \caption{Control signals:
           (a) $u_1$ to $u_6$;
           (b) $u_7$ to $u_{12}$.}
  \label{fig: Controls}
\end{figure}

\section{Conclusion}\label{Sec: Conclusion}
This paper presented a solution to the problem of set-point control of heat distribution with in-domain actuation described by an inhomogeneous parabolic PDE. To apply the paradigm of zero dynamic inverse, the system is presented in an equivalent \emph{parallel connection} form. The technique of flat systems is employed in the design of dynamic control and motion planning. As the control with multiple in-domain actuators results in a MIMO problem, a Green's function-based reference trajectory decomposition is introduced, which considerably simplifies the design and the implementation of the developed control scheme. The convergence and solvability analysis confirms the validity of the control algorithm and the simulation results demonstrate the viability of the proposed approach. Finally, as both ZDI design and flatness-based control can be carried out in a systematic manner, we can expect that the approach developed in this work may be applicable to a broad class of distributed parameter systems.

\section*{Acknowledgements}
This work is supported in part by a NSERC (Natural Science and Engineering Research Council of
Canada) Discovery Grant. The first author is also supported in part by the Fundamental Research Funds for the Central Universities (\#682014CX002EM).

\section*{References}

\bibliography{mybibfile}

\end{document}